\newtheorem{theorem}{\bf Theorem}[section]
\newtheorem{corollary}[theorem]{\bf Corollary}
\newcommand{\qed}{\hfill $\square$ \bigskip}
\begin{document}

\baselineskip=0.30in
\vspace*{40mm}

\begin{center}
{\LARGE \bf The Edge-Wiener Index and the Edge-Hyper-Wiener Index of Phenylenes}
\bigskip \bigskip

{\large \bf Petra \v Zigert Pleter\v sek
}
\bigskip\bigskip

\baselineskip=0.20in

\textit{Faculty of Chemistry and Chemical Engineering, University of Maribor, Slovenia \\
Faculty of Natural Sciences and Mathematics, University of Maribor, Slovenia} \\
{\tt petra.zigert@um.si}
\medskip

\bigskip\medskip

(\today)

\end{center}

\noindent
\begin{center} {\bf Abstract} \end{center}

\vspace{3mm}\noindent
Besides the well known Wiener index, which sums up the distances between all the pairs of vertices, and the hyper-Wiener index, which includes also the squares of distances, the edge versions of both indices attracted a lot of attention in the recent years. 

In this paper we consider the edge-Wiener index and the edge-hyper-Wiener index of phenylenes, which represent an important class of molecular graphs. For an arbitrary phenylene, four quotient trees based on the elementary cuts are defined in a similar way as it was previously done for benzenoid systems. The computation of the edge-Wiener index of the phenylene is then reduced to the calculation of the weighted Wiener indices of the corresponding quotient trees. Furthermore, a method for computing the edge-hyper-Wiener index of phenylenes is described. Finally, the application of these results gives closed formulas for the edge-Wiener index and the edge-hyper-Wiener index of linear phenylenes.

\baselineskip=0.30in



\section{Introduction}

The Wiener index of a graph is defined as the sum of distances between all pairs of vertices in the graph. It was introduced in 1947 by Wiener \cite{Wiener} and represents one of the most studied molecular descriptors. On the other hand, the hyper-Wiener index takes into account also the squares of distances and was first introduced by Randi\' c in 1993 \cite{randic}. The research on the Wiener index, the hyper-Wiener index and other distance-based descriptors is still a popular topic, see papers \cite{chen,dob,il-il,ramane} for some recent investigations.

The edge-Wiener index of a graph was introduced in \cite{iranmanesh-2009} as the Wiener index of the line graph and has been since then intensively investigated \cite{alizadeh-2014,azari-2011,chen2,dankelmann-2009,iranmanesh-2015,
knor-2014b,nadjafi-arani-2012}. However, the concept was studied even before as the Wiener index of line graphs, see \cite{buckley,gutman-1997b}. Similarly, the edge-hyper-Wiener index was introduced in \cite{edge-hyper}.

A cut method is a powerful method for efficient computation of topological indices of graphs. Some of the earliest results are related to the computation of the Wiener index of benzenoid systems and are presented in \cite{chepoi-1996,chepoi-1997,klavzar-1997}. See also \cite{klavzar-2015} for a survey paper on the cut method and \cite{aroc,cre-trat,cre-trat1,tratnik2} for some recent investigations on this topic. In \cite{kelenc} a cut method for the edge-Wiener index of benzenoid systems was proposed and in \cite{tratnik1} a cut method for the edge-hyper-Wiener index of partial cubes was developed.

Beside benzenoid hydrocarbons, phenylenes represent another interesting class of polycyclic conjugated molecules, whose properties have been extensively studied, see \cite{fur,gut-as}. The Wiener index and the hyper-Wiener index of phenylenes were studied in \cite{gu-do} and \cite{cash}, respectively. In this paper, we consider methods for computing the edge-Wiener index and the edge-hyper-Wiener index of phenylenes and use them to obtain closed formulas for linear phenylenes.

\section{Preliminaries}

The distance between two vertices $u,v$ of a graph, denoted by $d_G(u,v)$, is defined as the length of a shortest path between $u$ and $v$. The \textit{Wiener index} of a connected graph $G$ is

$$W(G) = \frac{1}{2} \sum_{u \in V(G)} \sum_{v \in V(G)} d_G(u,v)\,.$$

\noindent
To point out that it is the vertex-Wiener index, we will also write $W_v(G)$ for $W(G)$. From some technical reasons we also set $\widehat{d}_G(x,y) = d_G(x,y)$. The distance between two edges $e,f \in E(G)$, denoted by $d_G(e,f)$, is the usual shortest-path distance between vertices $e$ and $f$ of the line graph $L(G)$ of $G$. Here we follow this convention because in this way the pair $(E(G),d)$ forms a metric space. Then the \textit{edge-Wiener index} of a connected graph $G$ is defined as 

\begin{equation}
\label{eq:W-e-define}
W_e(G) = \frac{1}{2} \sum_{e \in E(G)} \sum_{f \in E(G)} d_G(e,f)\,.
\end{equation}

\noindent
In other words, $W_e(G)$ is just the Wiener index of the line graph of $G$. On the other hand, for edges $e = ab$ and $f = xy$ of a graph $G$ it is also legitimate to set
\begin{equation*}
\label{eq:hat-d}
\widehat{d}_G(e,f) = \min \lbrace d_G(a,x), d_G(a,y), d_G(b,x), d_G(b,y) \rbrace\,.
\end{equation*}

\noindent
Replacing $d$ with $\widehat{d}$ in~\eqref{eq:W-e-define}, another variant of the edge-Wiener index is obtained (see \cite{khalifeh-2009}) and we denote it by $\widehat{W}_e(G)$. However, there is an obvious connection between $W_e(G)$ and $\widehat{W}_e(G)$:

\begin{equation}
\label{eq:simple-connection}
\widehat{W}_e(G) =  W_e(G) - \binom{|E(G)|}{2}\,.
\end{equation}

\noindent
Finally, the {\em vertex-edge Wiener index} is
$$W_{ve}(G) = \sum_{x \in V(G)} \sum_{e \in E(G)} \widehat{d}_G(x,e)\,,$$

\noindent
where for a vertex $x \in V(G)$ and an edge $e=ab \in E(G)$ we set 
\begin{equation*}
\widehat{d}_G(x,e) = \min \lbrace d_G(x,a), d_G(x, b) \rbrace\,.
\end{equation*}

Next, we extend the above definitions to weighted graphs. Let $G$ be a connected graph and let $w:V(G)\rightarrow {\mathbb R}^+$  and $w':E(G)\rightarrow {\mathbb R}^+$ be given functions. Then $(G,w)$, $(G,w')$, and $(G,w,w')$ are a {\em vertex-weighted graph}, an {\em edge-weighted graph}, and a {\em vertex-edge weighted graph}, respectively. The corresponding Wiener indices of these weighted graphs are defined as
\begin{eqnarray*}
W(G,w) & = & \frac{1}{2} \sum_{x \in V(G)} \sum_{y \in V(G)} w(x)w(y)d_G(x,y)\,, \\
W_e(G,w') & = & \frac{1}{2} \sum_{e \in E(G)} \sum_{f \in E(G)} w'(e)w'(f)d_G(e,f)\,,\\
W_{ve}(G,w,w') & = & \sum_{x \in V(G)} \sum_{e \in E(G)} w(x)w'(e)\widehat{d}_G(x,e)\,. 
\end{eqnarray*}
Again, we will often use $W_v(G,w)$ for $W(G,w)$ and $\widehat{W}_e(G,w')$ is defined analogously as $W_e(G,w')$ by using $\widehat{d}_G(e,f)$ instead of $d_G(e,f)$.

\noindent
For an efficient computation of the Wiener indices of weighted trees we need some additional notation. If $T$ is a tree and $e \in E(T)$, then the graph $T-e$ consists of two components that will be denoted by  $C_1(e)$ and $C_2(e)$. For a vertex-edge weighted tree $(T,w,w')$, $e \in E(T)$, and $i \in \lbrace 1, 2 \rbrace$ set 
$$n_i(e) = \sum_{u \in V(C_i(e))}w_i(u) \qquad {\rm and}\qquad m_i(e) = \sum_{e \in E(C_i(e))}w_i'(e)\,.$$ 

\noindent
We then recall the following results:
\begin{equation}
\label{eq:wiener-tree}
W(T,w) = \sum_{e \in E(T)}n_1(e)n_2(e)\,,
\end{equation}

\begin{equation}
\label{eq:edge-wiener-tree}
\widehat{W}_e(T,w') = \sum_{e \in E(T)}m_1(e)m_2(e)\,,
\end{equation}

\begin{equation}
\label{prp:tree-weight-vertex-edge}
W_{ve}(T,w,w') = \sum_{e \in E(T)} \big(n_1(e)m_2(e) + n_2(e)m_1(e)\big).
\end{equation}

\noindent
Note that Equation \eqref{eq:wiener-tree} was proved in~\cite{klavzar-1997}, Equation ~\eqref{eq:edge-wiener-tree} in \cite{yousefi-azari-2011} and Equation \eqref{prp:tree-weight-vertex-edge} in \cite{kelenc}.
\smallskip

\noindent
The \textit{hyper-Wiener index} and the \textit{edge-hyper-Wiener index} of $G$ are defined as:
$$WW(G) = \frac{1}{4}\sum_{u \in V(G)}\sum_{v \in V(G)}d(u,v) + \frac{1}{4}\sum_{u \in V(G)}\sum_{v \in V(G)}d(u,v)^2,$$
$$WW_e(G) = \frac{1}{4}\sum_{e \in E(G)}\sum_{f \in E(G)}d(e,f) + \frac{1}{4}\sum_{e \in E(G)}\sum_{f \in E(G)}d(e,f)^2.$$
\smallskip

\noindent
Let ${\cal H}$ be the hexagonal (graphite) lattice and let $Z$ be a cricuit on it. Then a {\em benzenoid system} is induced by the vertices and edges of ${\cal H}$, lying on $Z$ and in its interior. Let $B$ be a benzenoid system. A vertex shared by three hexagons of $B$ is called an \textit{internal} vertex of $B$. A benzenoid system is said to be \textit{catacondensed} if it does not possess internal vertices. Otherwise it is called \textit{pericondensed}. Two distinct hexagons with a common edge are called \textit{adjacent}. The \textit{inner dual} of a benzenoid system $B$ is a graph which has hexagons of $B$ as vertices, two being adjacent whenever  the corresponding hexagons are also adjacent. Obviously, the inner dual of a catacondensed benzenoid system is a tree.

Let $B$ be a catacondensed benzenoid system. If we add squares between all pairs of adjacent hexagons of $B$, the obtained graph $G$ is called a \textit{phenylene}. We then say that $B$ is a \textit{hexagonal squeeze} of $G$ and denote it by $HS(G)=B$.

Let $G$ be a phenylene and $B$ a hexagonal squeeze for $B$. The edge set of $B$ can be naturally partitioned into sets $E_1'$, $E_2'$, and $E_3'$ of edges of the same direction. Denote the sets of edges of $G$ corresponding to the edges in $E_1'$, $E_2'$, and $E_3'$ by $E_1, E_2$, and $E_3$, respectively. Moreover, let $E_4 = E(G) \setminus (E_1 \cup E_2 \cup E_3)$ be the set of all the edges of $G$ that do not belong to $B$. For $i \in \lbrace 1, 2, 3, 4 \rbrace$, set $G_i = G - E_i$. The quotient graph $T_i$, $1\le i\le 4$, has connected components of $G_i$ as vertices, two such components $C$ and $C'$ being adjacent in $T_i$ if some edge in $E_i$ joins a vertex of $C$ to a vertex of $C'$. In a similar way we can define the quotient graphs $T_1', T_2', T_3'$ of hexagonal squeeze $B$. It is known \cite{chepoi-1996}  that for any benzenoid system its quotient graphs are trees.  Then a tree $T_i'$ is isomorphic to $T_i$ for $i=1,2,3$ and $T_4$ is isomorphic to the inner dual of $B$.

Now we extend the quotient trees $T_1$, $T_2$, $T_3, T_4$ to weighted trees $(T_i, w_i)$, $(T_i, w'_i)$, $(T_i, w_i, w'_i)$ as follows: 
\begin{itemize}
\item for $C \in V(T_i)$, let $w_i(C)$ be the number of edges in the component $C$ of $G_i$;
\item for $E = C_1C_2 \in E(T_i)$, let $w_i'(E)$ be the number of edges between components $C_1$ and $C_2$.
\end{itemize}

\section{A method for computing the edge-Wiener index of phenylenes}

In this section we show that the edge-Wiener index of a phenylene can be computed as the sum of Wiener indices of its weighted quotient trees. The obtained result is similar as the result for the edge-Wiener index of benzenoid systems \cite{kelenc}, but one additional quotient tree must be considered. 

\begin{theorem}
\label{thm:edge-Wiener}
Let $G$ be a phenylene. Then 
$$\widehat{W}_e(G)= \sum_{i=1}^4 \left( \widehat{W}_e(T_i, w_i') + W_v(T_i, w_i) + W_{ve}(T_i, w_i, w_i')\right).$$
\end{theorem}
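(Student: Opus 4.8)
The plan is to decompose the double sum in $\widehat{W}_e(G) = \frac{1}{2}\sum_{e,f\in E(G)} \widehat{d}_G(e,f)$ according to the structure of the partition $E(G)=E_1\cup E_2\cup E_3\cup E_4$ and to use the well-known fact that a phenylene is a partial cube, so that its $\Theta$-classes (elementary cuts) are exactly the sets $E_i$'s... more precisely, each $E_i$ is a union of $\Theta$-classes, and removing $E_i$ from $G$ gives the components that are the vertices of $T_i$. The key geometric input is that for a partial cube, the distance $\widehat d_G(e,f)$ between two edges equals the number of $\Theta$-classes separating suitable endpoints of $e$ and $f$; combined with the fact that $\Theta$-classes of a phenylene split into the four families above, one gets $\widehat d_G(e,f) = \sum_{i=1}^4 \widehat{d}_{T_i}(\pi_i(e),\pi_i(f))$, where $\pi_i$ maps an edge (or a component) of $G$ to the appropriate vertex or edge of $T_i$. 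This is the phenylene analogue of the benzenoid result in \cite{kelenc}, with the extra family $E_4$ accounting for the squares; I would establish it by the standard Djoković–Winkler/canonical-embedding argument, checking that $E_1,E_2,E_3,E_4$ together carry all $\Theta$-classes and that $T_4$ is a tree (the inner dual).

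Once the additive formula $\widehat d_G(e,f) = \sum_{i=1}^4 \widehat d_{T_i}(\pi_i(e),\pi_i(f))$ is in hand, I would substitute it into the definition of $\widehat{W}_e(G)$ and exchange the order of summation, so that $\widehat{W}_e(G) = \sum_{i=1}^4 \frac12\sum_{e,f\in E(G)} \widehat d_{T_i}(\pi_i(e),\pi_i(f))$. For each fixed $i$, I would then analyze $\pi_i$: an edge $e\in E(G)$ either lies inside a single component $C$ of $G_i$ (i.e.\ $e\notin E_i$), in which case $\pi_i(e)=C\in V(T_i)$, or $e\in E_i$, in which case $e$ joins two components and $\pi_i(e)$ should be thought of as the corresponding edge $E\in E(T_i)$. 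Splitting the inner double sum over $E(G)$ into the part with both edges outside $E_i$, the part with both inside $E_i$, and the mixed part, the three pieces become respectively $W_v(T_i,w_i)$, $\widehat W_e(T_i,w_i')$, and $W_{ve}(T_i,w_i,w_i')$, after one checks that the multiplicities match the weight definitions: $w_i(C)$ counts edges of $G$ inside $C$, $w_i'(E)$ counts edges of $G$ in the cut $E$, and the distance between two edges of $G$ living in components/cuts equals the corresponding tree distance $d_{T_i}$ (for two vertices of $T_i$), $\widehat d_{T_i}$ (for two edges), or $\widehat d_{T_i}$ of a vertex and an edge. Here I would invoke the tree formulas \eqref{eq:wiener-tree}, \eqref{eq:edge-wiener-tree}, \eqref{prp:tree-weight-vertex-edge} only implicitly — actually the reduction is to the \emph{definitions} of these weighted indices, and the closed tree formulas are what make the whole thing computable.

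A delicate point I would flag is the choice of "endpoints" implicit in $\widehat d$: since $\widehat d_G(e,f)$ takes a minimum over the four vertex-pairs, I must argue that for a phenylene the minimizing pair is coherent across all four quotient trees simultaneously, i.e.\ there is one canonical choice of endpoint of $e$ and one of $f$ realizing the minimum in every $T_i$ at once. This is exactly where the partial-cube (in fact, the specific convexity/gatedness of elementary cuts in phenylenes) structure is needed, and it is the reason the statement uses $\widehat W_e$ rather than $W_e$; the passage back to $W_e(G)$ would then go through \eqref{eq:simple-connection}. So the main obstacle is not the bookkeeping of the three sub-sums — that is routine once the indexing is set up — but rather the clean proof of the additive decomposition of $\widehat d_G$ over the four quotient trees, including the verification that $E_4$ supplies precisely the missing $\Theta$-classes and that the four projections $\pi_i$ can be synchronized on the common minimizing endpoints. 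I would handle that first, possibly as a separate lemma, and then the theorem follows by the substitute-and-swap computation sketched above.
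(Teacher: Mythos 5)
Your proposal follows essentially the same route as the paper: define the projections $\alpha_i$ (your $\pi_i$) onto the four weighted quotient trees, prove the additive decomposition $\widehat{d}_G(e,f)=\sum_{i=1}^4 \widehat{d}_{T_i}(\alpha_i(e),\alpha_i(f))$, and then substitute, swap, and split the double sum into the vertex--vertex, edge--edge and vertex--edge parts, which are exactly $W_v(T_i,w_i)$, $\widehat{W}_e(T_i,w_i')$ and $W_{ve}(T_i,w_i,w_i')$. The only difference is one of flavour: instead of invoking the general Djokovi\'c--Winkler/partial-cube machinery and worrying about synchronizing the minimizing endpoints across the four trees, the paper fixes one pair of endpoints realizing $\widehat{d}_G(e,f)$, takes a shortest path, counts its crossings with each $E_i$, and settles the image distances by a short case analysis on whether $e$ and $f$ lie in $E_i$, which disposes of that concern directly.
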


\begin{proof}
Let $G$ be a phenylene and let $T_1$, $T_2$, $T_3, T_4$ be its quotient trees. For any $i \in \lbrace 1, 2, 3, 4 \rbrace$ we define $\alpha_i : E(G) \longrightarrow V(T_i) \cup E(T_i)$ by
\begin{equation}
\label{eq:alpha-i}
\alpha_i(e) = \left\{ 
   \begin{array}{l c l}
     C \in V(T_i)&; & \quad \text{$e \in E(C)$}\,,\\
     C_1C_2 \in E(T_i) &; & \quad \text{$e = ab$ and $a \in V(C_1), b \in V(C_2)$}\,.
   \end{array} \right.
\end{equation}

We will first show that 
\begin{equation} \label{osnova1}
\widehat{d}_G(e,f)= \sum_{i=1}^4 \widehat{d}_{T_i}(\alpha_i(e), \alpha_i(f))
\end{equation}
holds for any pair of edges $e,f \in E(G)$. Let $e=ab$, $f=xy$ such that $\widehat{d}_G(e,f)=\widehat{d}_G(a,x)$. Select any shortest path $P$ from $a$ to $x$ in $G$ and define $F_i = E(P) \cap E_i$ for $i \in \lbrace 1,2,3,4 \rbrace$. As $P$ is a shortest path, no two edges of $F_i$ belong to the same cut. Since $\widehat{d}_G(e,f) = |F_1| + |F_2| + |F_3| + |F_4|$ it suffices to show that for $i \in \lbrace 1,2,3,4 \rbrace$ it holds $|F_i| = \widehat{d}_{T_i}(\alpha_i(e), \alpha_i(f))$. Let $C_1, C_2 \in V(T_i)$ be connected components of $G- E_i$ such that $a \in V(C_1)$ and $x \in V(C_2)$. It follows that $\widehat{d}_{T_i}(C_1, C_2) = |F_i|$. In order to show that $\widehat{d}_{T_i}(\alpha_i(e), \alpha_i(f)) = \widehat{d}_{T_i}(C_1, C_2)$, we consider the following cases:

\begin{itemize}
\item $e \notin E_i$ and $f \notin E_i$.\\
In this case we have $\alpha_i(e) = C_1$ and $\alpha_i(f)=C_2$ and the desired conclusion is clear.
\item Exactly one of $e$ and $f$ is in $E_i$. \\
We may assume without loss of generality that $e \in E_i$ and $f \notin E_i$. Then $\alpha_i(e) = C'C_1 \in E(T_i)$ for some $C' \in V(T_i)$ and $\alpha_i(f)=C_2$. Since $\widehat{d}_{T_i}(C_1, C_2) \leq \widehat{d}_{T_i}(C', C_2)$, it follows that $\widehat{d}_{T_i}(\alpha_i(e), \alpha_i(f)) = \widehat{d}_{T_i}(C_1, C_2)$.
\item $e \in E_i$ and $f \in E_i$. \\
Now $\alpha_i(e) = C'C_1 \in E(T_i)$ for some $C' \in V(T_i)$ and $\alpha_i(f)=C''C_2$ for some $C'' \in V(T_i)$. We thus get that $\widehat{d}_{T_i}(\alpha_i(e), \alpha_i(f)) = \widehat{d}_{T_i}(C_1, C_2)$.
\end{itemize}

\noindent
Since in all possible cases $\widehat{d}_{T_i}(\alpha_i(e),\alpha_i(f)) = \widehat{d}_{T_i}(C_1, C_2)$, Equation \eqref{osnova1} holds. Applying this result we obtain
\begin{eqnarray*}
\widehat{W}_e(G) & = & \frac{1}{2} \sum_{e \in E(G)} \sum_{f \in E(G)} \widehat{d}_G(e,f) = \frac{1}{2} \sum_{e \in E(G)} \sum_{f \in E(G)} \Bigg( \sum_{i = 1}^4 \widehat{d}_{T_i}(\alpha_i(e), \alpha_i(f)) \Bigg) \\
  & = & \sum_{i=1}^4 \Bigg( \frac{1}{2} \sum_{e \in E(G)} \sum_{f \in E(G)} \widehat{d}_{T_i}(\alpha_i(e), \alpha_i(f)) \Bigg)\,.
\end{eqnarray*}
   
The obtained sums can be divided into three sums regarding the function $\alpha_i$ from Equation \eqref{eq:alpha-i}: 
\begin{eqnarray*}
\widehat{W}_e(G) & =  &   
  \sum_{i=1}^4 \Bigg( \frac{1}{2} \sum_{\substack{e \in E(G) \\ \alpha_i(e) \in E(T_i)}} \sum_{\substack{f \in E(G) \\ \alpha_i(f) \in E(T_i)}} \widehat{d}_{T_i}(\alpha_i(e), \alpha_i(f)) \\
 & & + \frac{1}{2} \sum_{\substack{e \in E(G) \\ \alpha_i(e) \in V(T_i)}} \sum_{\substack{f \in E(G) \\ \alpha_i(f) \in V(T_i)}} \widehat{d}_{T_i}(\alpha_i(e), \alpha_i(f)) \\
 &  & + \sum_{\substack{e \in E(G) \\ \alpha_i(e) \in V(T_i)}} \sum_{\substack{f \in E(G) \\ \alpha_i(f) \in E(T_i)}} \widehat{d}_{T_i}(\alpha_i(e), \alpha_i(f)) \Bigg)\,. 
\end{eqnarray*}
Application of the definition of the weighted trees finally results in 
\begin{eqnarray*}
\widehat{W}_e(G) & = & \sum_{i=1}^4 \Bigg( \frac{1}{2} \sum_{E \in E(T_i)} \sum_{F \in E(T_i)} w'_i(E)w'_i(F)\widehat{d}_{T_i}(E, F) + \\
 & & + \frac{1}{2} \sum_{C_1 \in V(T_i)} \sum_{C_2 \in V(T_i)} w_i(C_1)w_i(C_2)\widehat{d}_{T_i}(C_1, C_2)\\
 & &  + \sum_{C \in V(T_i)} \sum_{E \in E(T_i)} w_i(C)w'_i(E)\widehat{d}_{T_i}(C, E) \Bigg) \\ 
 & = & \sum_{i=1}^4 \left( \widehat{W}_e(T_i, w_i') + W_v(T_i, w_i) + W_{ve}(T_i, w_i, w_i')\right).
\end{eqnarray*}
\qed
\end{proof}

\noindent
The weighted Wiener indices of trees can be computed in linear time by using Equations \eqref{eq:wiener-tree}, \eqref{eq:edge-wiener-tree}, and \eqref{prp:tree-weight-vertex-edge}. Also, the quotient trees can be obtained in linear time as well, for the details see \cite{kelenc}. Consequently, we obtain the following corollary.

\begin{corollary}
Let $G$ be a phenylene with $m$ edges. Then the edge-Winer index of $G$ can be computed in $O(m)$ time.
\end{corollary}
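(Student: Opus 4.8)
The plan is to read the claimed running time directly off Theorem~\ref{thm:edge-Wiener}, combined with the linear-time evaluability of the weighted Wiener indices of trees and the linear-time constructibility of the quotient trees. First I would observe that, by Equation~\eqref{eq:simple-connection}, $W_e(G) = \widehat{W}_e(G) + \binom{m}{2}$, so it suffices to compute $\widehat{W}_e(G)$ in $O(m)$ time and then add the binomial coefficient in constant time. By Theorem~\ref{thm:edge-Wiener}, $\widehat{W}_e(G)$ is the sum of the twelve quantities $\widehat{W}_e(T_i,w_i')$, $W_v(T_i,w_i)$, $W_{ve}(T_i,w_i,w_i')$ for $i \in \{1,2,3,4\}$, so it is enough to bound the cost of producing the four weighted quotient trees and of evaluating each of these twelve terms.

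For the construction I would proceed as in \cite{kelenc}: a single traversal of $G$ recognizes a hexagonal squeeze and assigns each edge of $G$ to one of the classes $E_1,E_2,E_3,E_4$, after which a standard linear-time search identifies, for each $i$, the connected components of $G_i = G - E_i$ (the vertices of $T_i$), the adjacencies between them (the edges of $T_i$), and the weights $w_i$ and $w_i'$. Crucially $|V(T_i)| + |E(T_i)| = O(m)$, since the number of edges of $T_i$ is at most $|E_i| \le m$ and, $T_i$ being a tree, it has exactly one more vertex than it has edges. Hence all four trees, with both weight functions, become available after $O(m)$ work.

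It then remains to evaluate the three weighted indices attached to each $(T_i,w_i,w_i')$ via Equations~\eqref{eq:wiener-tree}, \eqref{eq:edge-wiener-tree} and \eqref{prp:tree-weight-vertex-edge}. Each of these is a sum over $E(T_i)$ of a product of the numbers $n_1(e), n_2(e), m_1(e), m_2(e)$; rooting $T_i$ at an arbitrary vertex and running one post-order pass computes $n_1(e)$ and $m_1(e)$ for all edges $e$ simultaneously (these being the weighted vertex- and edge-totals of the component of $T_i - e$ avoiding the root), and subtracting from the total vertex-weight and total edge-weight of $T_i$ yields $n_2(e)$ and $m_2(e)$. A second pass of size $O(|V(T_i)|)$ then accumulates each of the three sums, so every one of the twelve terms costs $O(m)$, and hence so does their total; adding $\binom{m}{2}$ finishes the computation of $W_e(G)$.

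The main obstacle, modest as it is, lies not in any single formula but in the bookkeeping: one has to verify that the whole pipeline — detecting the edge classes, building the four vertex-edge weighted quotient trees, and running the two tree traversals for each — amounts to only a constant number of linear-time sweeps over objects of total size $O(m)$. This is essentially the algorithmic content already established for benzenoid systems in \cite{kelenc}, and the one extra quotient tree $T_4$ (isomorphic to the inner dual of $HS(G)$) introduces nothing new, being again a tree of size $O(m)$ that is processed in exactly the same way.
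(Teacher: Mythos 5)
Your argument is correct and follows the same route as the paper: the paper also derives the corollary from Theorem~\ref{thm:edge-Wiener} together with the linear-time construction of the quotient trees (citing \cite{kelenc}) and the linear-time evaluation of the weighted tree indices via Equations~\eqref{eq:wiener-tree}, \eqref{eq:edge-wiener-tree} and \eqref{prp:tree-weight-vertex-edge}. Your proposal merely spells out the bookkeeping (the relation \eqref{eq:simple-connection} and the rooted post-order computation of $n_1,n_2,m_1,m_2$) that the paper leaves implicit.
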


\section{A method for computing the edge-hyper-Wiener index of phenylenes}

In this section, we briefly introduce the method for computing the edge-hyper-Wiener index of phenylenes, which is based on a general method for partial cubes, see \cite{tratnik1}. First, we state some important definitions.

Two edges $e_1 = u_1 v_1$ and $e_2 = u_2 v_2$ of graph $G$ are in relation $\Theta$, $e_1 \Theta e_2$, if
$$d_G(u_1,u_2) + d_G(v_1,v_2) \neq d_G(u_1,v_2) + d_G(u_1,v_2).$$
Note that this relation is also known as Djokovi\' c-Winkler relation.
The relation $\Theta$ is reflexive and symmetric, but not necessarily transitive \cite{klavzar-book}.

The {\em hypercube} $Q_n$ of dimension $n$ is defined in the following way: 
all vertices of $Q_n$ are presented as $n$-tuples $(x_1,x_2,\ldots,x_n)$ where $x_i \in \{0,1\}$ for each $1\leq i\leq n$ 
and two vertices of $Q_n$ are adjacent if the corresponding $n$-tuples differ in precisely one coordinate. A subgraph $H$ of a graph $G$ is called an \textit{isometric subgraph} if for each $u,v \in V(H)$ it holds $d_H(u,v) = d_G(u,v)$. Any isometric subgraph of a hypercube is called a {\em partial cube}. For an edge $ab$ of a graph $G$, let $W_{ab}$ be the set of vertices of $G$ that are closer to $a$ than
to $b$. We write $\langle S \rangle$ for the subgraph of $G$ induced by $S \subseteq V(G)$. 

\noindent The following theorem gives two basic characterizations of partial cubes:
\begin{theorem} \cite{klavzar-book} \label{th:partial-k} For a connected graph $G$, the following statements are equivalent:
\begin{itemize}
\item [(i)] $G$ is a partial cube.
\item [(ii)] $G$ is bipartite, and $\langle W_{ab} \rangle $ and $\langle W_{ba} \rangle$ are convex subgraphs of $G$ for all $ab \in E(G)$.
\item [(iii)] $G$ is bipartite and $\Theta = \Theta^*$.
\end{itemize}
\end{theorem}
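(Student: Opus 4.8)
The plan is to prove the cycle of implications $(i)\Rightarrow(ii)\Rightarrow(iii)\Rightarrow(i)$, where in $(iii)$ the symbol $\Theta^*$ denotes the transitive closure of $\Theta$; since $\Theta\subseteq\Theta^*$ always holds, condition $(iii)$ says exactly that $\Theta$ is transitive, hence an equivalence relation on $E(G)$. Throughout I will repeatedly use the elementary fact that in a bipartite graph every vertex $v$ and every edge $ab$ satisfy $|d_G(v,a)-d_G(v,b)|=1$, so that $V(G)=W_{ab}\cup W_{ba}$ is a partition.

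For $(i)\Rightarrow(ii)$, fix an isometric embedding of $G$ into a hypercube $Q_n$. Bipartiteness of $G$ is inherited from $Q_n$. Given $ab\in E(G)$, its endpoints differ in exactly one coordinate, say the $i$-th; comparing $d_G(v,a)=d_{Q_n}(v,a)$ with $d_G(v,b)=d_{Q_n}(v,b)$ for an arbitrary $v\in V(G)$ shows that $W_{ab}$ is precisely the set of vertices of $G$ whose $i$-th coordinate agrees with that of $a$. Convexity of $\langle W_{ab}\rangle$ then follows because a shortest $u$--$v$ path in $G$ with $u,v\in W_{ab}$ is also a geodesic of $Q_n$, and a hypercube geodesic flips each coordinate at most once; since $u$ and $v$ agree in coordinate $i$, that coordinate is never flipped, so the path stays inside $W_{ab}$. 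The same argument applies to $\langle W_{ba}\rangle$.

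For $(ii)\Rightarrow(iii)$, I would first record the standard fact that in a bipartite graph $e=ab\ \Theta\ f=xy$ holds if and only if $f$ joins $W_{ab}$ to $W_{ba}$ (equivalently, $e$ joins $W_{xy}$ to $W_{yx}$); this comes directly from expanding the defining inequality and using the partition above. Writing $F_e$ for the set of edges joining $W_{ab}$ to $W_{ba}$, it then suffices to show that $f\in F_e$ forces $F_f=F_e$, i.e. that the bipartition $\{W_{xy},W_{yx}\}$ coincides with $\{W_{ab},W_{ba}\}$. This is where convexity of the half-spaces enters, the point being that a geodesic between two vertices on the same side of the cut $\{W_{ab},W_{ba}\}$ cannot cross it, so the two sides are distinguished by which component of $\langle W_{ab}\rangle$, $\langle W_{ba}\rangle$ a vertex lies in. Once this coincidence is established, the sets $F_e$ form a partition of $E(G)$ and $\Theta$ is precisely the relation of lying in a common block, hence transitive. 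Verifying the coincidence of the two bipartitions from convexity is the most delicate step of this implication.

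For $(iii)\Rightarrow(i)$, let $F_1,\dots,F_k$ be the $\Theta$-classes. The crux is the \emph{cut lemma}: for each $j$, the graph $G-F_j$ has exactly two connected components, and $F_j$ is exactly the set of edges running between them. This is where bipartiteness and transitivity are combined: a cycle argument shows that if the endpoints of some edge of $F_j$ were joined by a path avoiding $F_j$, then a minimal such path would be forced to contain an edge $\Theta$-related to the given one, a contradiction, and a similar analysis using the characterization from the previous paragraph rules out a third component. Granting this, define $\ell:V(G)\to\{0,1\}^k$ by letting the $j$-th coordinate of $\ell(v)$ record which side of the $j$-th cut contains $v$; adjacent vertices differ in exactly the coordinate of the $\Theta$-class of the joining edge, so $\ell$ maps $G$ into $Q_k$. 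It remains to check that $\ell$ is isometric. Here I would use the short computation that no shortest path in $G$ contains two $\Theta$-related edges (on a geodesic all the relevant distances are forced, and the defining inequality becomes an equality), so a geodesic crosses each $\Theta$-class at most once; combined with the parity observation that any $u$--$v$ walk crosses every class separating $u$ from $v$ an odd number of times, this forces $d_G(u,v)$ to equal the Hamming distance between $\ell(u)$ and $\ell(v)$, completing the cycle. I expect the cut lemma to be the main obstacle.
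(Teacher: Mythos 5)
The paper does not actually prove this statement: it is the classical Djokovi\'{c}--Winkler characterization of partial cubes, quoted from the cited handbook \cite{klavzar-book}, so there is no internal proof to compare yours against; your outline follows the standard route from that literature and is correct as a plan. The implication (i)$\Rightarrow$(ii) is complete as sketched. For (ii)$\Rightarrow$(iii), the step you flag --- that $f=xy\in F_e$ forces $\{W_{xy},W_{yx}\}=\{W_{ab},W_{ba}\}$ --- closes exactly along the line you indicate: for $u\in W_{ab}$ (say $x\in W_{ab}$, $y\in W_{ba}$), bipartiteness gives $d(u,y)=d(u,x)\pm 1$, and if $d(u,y)=d(u,x)-1$ then a $u$--$y$ geodesic extended by the edge $yx$ is a $u$--$x$ geodesic leaving $W_{ab}$, contradicting convexity of $\langle W_{ab}\rangle$; hence $W_{ab}\subseteq W_{xy}$, and the symmetric inclusions together with the partition $V(G)=W_{ab}\cup W_{ba}$ give the coincidence, after which $\Theta$ is the ``same cut'' relation and is transitive. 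For (iii)$\Rightarrow$(i), your cut lemma is true, and the half you worry about (that each of $W_{ab}$, $W_{ba}$ stays connected after deleting the $\Theta$-class $F$ of $ab$) follows from your own geodesic lemma: a geodesic from $u\in W_{ab}$ to $a$ that left $W_{ab}$ would cross the cut at least twice, yielding two distinct edges both $\Theta$-related to $ab$, hence (by transitivity) to each other, which cannot happen on a geodesic. It is worth noting that the cut lemma can be bypassed altogether: fix a representative edge $a_jb_j$ in each class $F_j$ and let coordinate $j$ of a vertex record which of $W_{a_jb_j}$, $W_{b_ja_j}$ contains it; the bipartite characterization of $\Theta$ shows each edge flips exactly the coordinate of its own class, and counting classes along a geodesic (each appearing at most once) gives isometry directly --- this is Winkler's original, slightly shorter argument, whereas your version proves the stronger structural statement about the cuts along the way.
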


\noindent Furthermore, it is known that when $G$ is a partial cube and $E$ is a $\Theta$-class of $G$, then $G - E$ has exactly two connected components, namely $\langle W_{ab} \rangle $ and $\langle W_{ba} \rangle$, where $ab \in E$. For more details about partial cubes see \cite{klavzar-book}.

To state the method for computing the edge-hyper-Wiener index of any partial cube, we need to introduce some more notation. If $G$ is a partial cube with $\Theta$-classes $E_1, \ldots, E_d$, we denote by $U_i$ and $U_i'$ the connected components of the graph $G - E_i$, where $i \in \lbrace 1, \ldots, d \rbrace$. For any distinct $i,j \in \lbrace 1, \ldots, d \rbrace$ set

\begin{eqnarray*}
M_{ij}^{11} & = & E(U_i) \cap E(U_j), \\
M_{ij}^{10} & = & E(U_i) \cap E(U_j'), \\
M_{ij}^{01} & = & E(U_i') \cap E(U_j), \\
M_{ij}^{00} & = & E(U_i') \cap E(U_j').
\end{eqnarray*}

\noindent
Also, for $i,j \in \lbrace 1, \ldots, d \rbrace$ and $k,l \in \lbrace 0, 1 \rbrace$ we define
$$m_{ij}^{kl} = |M_{kl}^{ij}|.$$

\begin{theorem} \cite{tratnik1} \label{osnova}
Let $G$ be a partial cube and let $d$ be the number of its $\Theta$-classes. Then
\begin{equation*}  WW_e(G) = 2W_e(G) +  \sum_{i = 1}^{d-1} \sum_{j=i+1}^d \Big( m_{ij}^{11}m_{ij}^{00} + m_{ij}^{10}m_{ij}^{01} \Big) - {{|E(G)|}\choose{2}}.
\end{equation*}
\end{theorem}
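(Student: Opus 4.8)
Here is the plan for proving Theorem~\ref{osnova}.

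The plan is to reduce everything to the known cut-formula for the ordinary edge-Wiener index of a partial cube (the $W_e$ analogue of Equation~\eqref{osnova1} applied edge-by-edge), together with the defining identity $WW_e(G) = \frac14\sum_{e,f} d(e,f) + \frac14\sum_{e,f} d(e,f)^2$. Since $G$ is a partial cube with $\Theta$-classes $E_1,\dots,E_d$, for any two edges $e,f \in E(G)$ the line-graph distance $d_G(e,f)$ equals the number of $\Theta$-classes that ``separate'' $e$ and $f$ in the appropriate sense; concretely, if $\gamma_i(e)\in\{0,1\}$ records which side of $E_i$ the edge $e$ lies on (with $\gamma_i(e)$ declared to be, say, $1$ when $e\in E_i$ itself, matching the convention forced by the cut method), then $d_G(e,f) = \sum_{i=1}^d \delta_i(e,f)$ where $\delta_i(e,f)$ is $1$ exactly when $e$ and $f$ are on opposite sides of $E_i$ and $0$ otherwise. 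This is precisely the statement underlying the cut formula for $W_e$ on partial cubes that is cited from \cite{tratnik1,kelenc}; I would invoke it rather than reprove it.

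First I would handle the quadratic term. Writing $d_G(e,f)^2 = \big(\sum_i \delta_i(e,f)\big)^2 = \sum_i \delta_i(e,f) + 2\sum_{i<j}\delta_i(e,f)\delta_j(e,f)$, the diagonal contribution $\sum_i \delta_i(e,f)$ sums back to $\sum_{e,f} d_G(e,f) = 2W_e(G)$, while the cross terms give $2\sum_{i<j}\sum_{e,f}\delta_i(e,f)\delta_j(e,f)$. The inner double sum over $e,f$ of $\delta_i(e,f)\delta_j(e,f)$ counts ordered pairs $(e,f)$ that are separated by $E_i$ \emph{and} by $E_j$; grouping the edges according to the pair $(k,l)\in\{0,1\}^2$ recording their side relative to $(E_i,E_j)$, an edge in the $(1,1)$-region is separated from exactly those in the $(0,0)$-region by both classes, and similarly $(1,0)$ pairs with $(0,1)$. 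Hence $\sum_{e,f}\delta_i(e,f)\delta_j(e,f) = 2\big(m_{ij}^{11}m_{ij}^{00} + m_{ij}^{10}m_{ij}^{01}\big)$, the factor $2$ from ordered pairs. Therefore $\sum_{e,f} d_G(e,f)^2 = 2W_e(G) + 4\sum_{i<j}\big(m_{ij}^{11}m_{ij}^{00} + m_{ij}^{10}m_{ij}^{01}\big)$.

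Now assemble: $WW_e(G) = \frac14\big(2W_e(G)\big) + \frac14\big(2W_e(G) + 4\sum_{i<j}(m_{ij}^{11}m_{ij}^{00} + m_{ij}^{10}m_{ij}^{01})\big) = W_e(G) + \sum_{i<j}(m_{ij}^{11}m_{ij}^{00} + m_{ij}^{10}m_{ij}^{01})$. This is off from the claimed formula by $W_e(G)$ versus $2W_e(G)$ and by the missing $-\binom{|E(G)|}{2}$, which tells me the convention in \cite{tratnik1} uses $\widehat d$-style edge distances or, more likely, the statement already absorbs the shift $W_e(G) = \widehat W_e(G) + \binom{|E(G)|}{2}$ from Equation~\eqref{eq:simple-connection}; the careful bookkeeping of which edge-distance is meant (the genuine line-graph distance $d_G$ versus $\widehat d_G$, differing by $1$ on every non-incident pair) is exactly where the extra $W_e(G)$ and the $-\binom{|E(G)|}{2}$ term enter, and reconciling the two conventions so the constants land correctly is the main obstacle. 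I would resolve it by doing the computation once with $d_G$ throughout, then substituting $d_G(e,f)^2$ and $d_G(e,f)$ in terms of $\widehat d_G$ wherever \cite{tratnik1} phrased its intermediate partial-cube lemma in the $\widehat d$ convention, collecting the resulting constant as a multiple of $\binom{|E(G)|}{2}$.
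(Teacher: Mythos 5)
There is nothing to compare against inside the paper: Theorem~\ref{osnova} is quoted from \cite{tratnik1} without proof. Judged on its own, your plan has the right skeleton (expand the square of a cut-decomposed edge distance; for a fixed pair of classes the cross terms count ordered pairs of edges separated by both $E_i$ and $E_j$, which is $2\big(m_{ij}^{11}m_{ij}^{00}+m_{ij}^{10}m_{ij}^{01}\big)$), but it rests on a false identity and then defers exactly the step that produces the theorem's constants. The line-graph distance does not decompose over $\Theta$-classes: for every pair of distinct edges $d_G(e,f)=\widehat{d}_G(e,f)+1$, and it is $\widehat{d}_G$, not $d_G$, that satisfies $\widehat{d}_G(e,f)=\sum_{i=1}^{d}\delta_i(e,f)$, where $\delta_i(e,f)=1$ precisely when $e$ and $f$ lie entirely in opposite components of $G-E_i$, and $\delta_i(e,f)=0$ whenever $e\in E_i$ or $f\in E_i$. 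No choice of a side label $\gamma_i(e)$ for cut edges can repair your version, since the uniform $+1$ offset on all off-diagonal pairs is not a per-class quantity; moreover the sets $M_{ij}^{kl}$ are built from $E(U_i),E(U_i'),E(U_j),E(U_j')$ and hence exclude the cut edges, so lumping $e\in E_i$ into one side would also change the counts $m_{ij}^{kl}$ you later use. As a consequence your assembled formula $WW_e(G)=W_e(G)+WW_e^*(G)$ is not off by a convention-dependent constant: it differs from the claimed $2W_e(G)+WW_e^*(G)-\binom{|E(G)|}{2}$ by $\widehat{W}_e(G)$, a quantity depending on $G$. You notice the mismatch but only promise to ``reconcile the conventions''; that reconciliation is the actual content of the proof and is missing.

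The repair is short and should be carried out explicitly: work with $\widehat{d}$ throughout and convert at the end. Since the diagonal contributes nothing and $d_G(e,f)=\widehat{d}_G(e,f)+1$ for $e\neq f$,
\begin{align*}
4\,WW_e(G) &= \sum_{e\neq f}\big(\widehat{d}_G(e,f)+1\big)+\sum_{e\neq f}\big(\widehat{d}_G(e,f)+1\big)^2
= \sum_{e\neq f}\widehat{d}_G(e,f)^2+3\sum_{e\neq f}\widehat{d}_G(e,f)+4\binom{|E(G)|}{2},
\end{align*}
and expanding $\widehat{d}_G(e,f)^2=\big(\sum_i\delta_i(e,f)\big)^2=\sum_i\delta_i(e,f)+2\sum_{i<j}\delta_i(e,f)\delta_j(e,f)$ with $\sum_{e,f}\delta_i\delta_j=2\big(m_{ij}^{11}m_{ij}^{00}+m_{ij}^{10}m_{ij}^{01}\big)$ gives $\sum_{e\neq f}\widehat{d}_G(e,f)^2=2\widehat{W}_e(G)+4\,WW_e^*(G)$. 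Hence $WW_e(G)=2\widehat{W}_e(G)+WW_e^*(G)+\binom{|E(G)|}{2}$, and Equation~\eqref{eq:simple-connection} turns this into $2W_e(G)+WW_e^*(G)-\binom{|E(G)|}{2}$, as claimed. With the corrected identity for $\widehat{d}$ and this final conversion written out, your argument becomes a complete and standard proof of the theorem.
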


\noindent
The sum from Theorem \ref{osnova} will be denoted by $WW_e^*(G)$, i.e.
$$WW_e^*(G)= \sum_{i = 1}^{d-1} \sum_{j=i+1}^d \Big( m_{ij}^{11}m_{ij}^{00} + m_{ij}^{10}m_{ij}^{01} \Big).$$

Obviously, by case $(ii)$ of Theorem \ref{th:partial-k} any phenylene is a partial cube. An \textit{elementary cut} $C$ of a phenylene $G$ is a line segment that starts at
the center of a peripheral edge of $G$,
goes orthogonal to it and ends at the first next peripheral
edge of $G$. By $C$ we sometimes also denote the set of edges that are intersected by the corresponding elementary cut. Moreover, an elementary cut of a phenylene coincides with exactly one of its $\Theta$-classes.

Let $C_i$ and $C_j$ be two distinct elementary cuts ($\Theta$-classes) of a phenylene $G$. Since the elementary cuts can have an intersection or not, we obtain the following two options from Figure \ref{moznosti}. 

\begin{figure}[h!] 
\begin{center}
\includegraphics[scale=0.7]{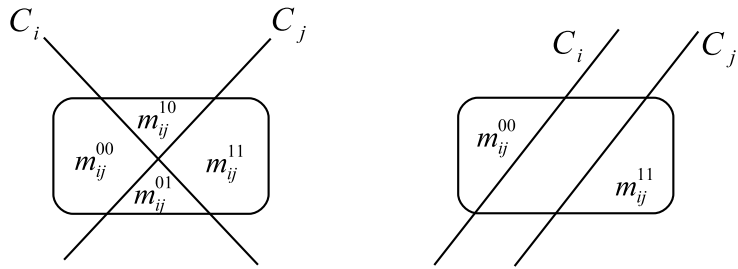}
\end{center}
\caption{\label{moznosti} Two different positions of two elementary cuts.}
\end{figure}

\noindent
In the following, the contribution of the pair $C_i, C_j$ to $WW_e^*(G)$ is denoted as $f(C_i,C_j)$. Therefore, we get

$$f(C_i,C_j)= 
\left\{ \begin{array}{lcl}
m^{00}_{ij}m^{11}_{ij} + m^{01}_{ij}m^{10}_{ij}&; & C_i \text{ and } C_j \text{ intersect}, \\
m^{00}_{ij}m^{11}_{ij}&; & \text{otherwise}.
\end{array} \right.
$$
\noindent
Hence, for phenylene $G$ with $d$ elementary cuts it holds
$$WW_e^{*}(G) = \sum_{i = 1}^{d-1} \sum_{j=i+1}^d f(C_i,C_j).$$

\section{Linear phenylenes}

A hexagon of a phenylene $PH$ is called \textit{terminal} if it has a common edge with only one square of $PH$, otherwise we say that it is \textit{internal}. If an internal hexagon has common edges with exactly two other squares, then it has exactly two vertices of degree two. If this two vertices are not adjacent, we say that such hexagon is \textit{linear}. A phenylene is called \textit{linear} if all its internal hexagons are linear. A linear phenylene with exactly $n$ hexagons will be denoted by $PH_n$, see Figure \ref{phenylene3}.

 \begin{figure}[!htb]
	\centering
		\includegraphics[scale=0.8, trim=0cm 0cm 0cm 0cm]{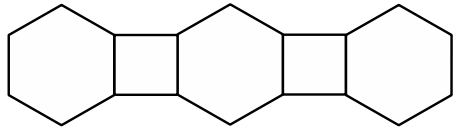}
\caption{Linear phenylene $PH_3$.}
	\label{phenylene3}
\end{figure}

We first compute the edge-Wiener index. Therefore, we determine the weighted quotient trees from Figure \ref{trees2}.

 \begin{figure}[!htb]
	\centering
		\includegraphics[scale=0.8, trim=0cm 0cm 0cm 0cm]{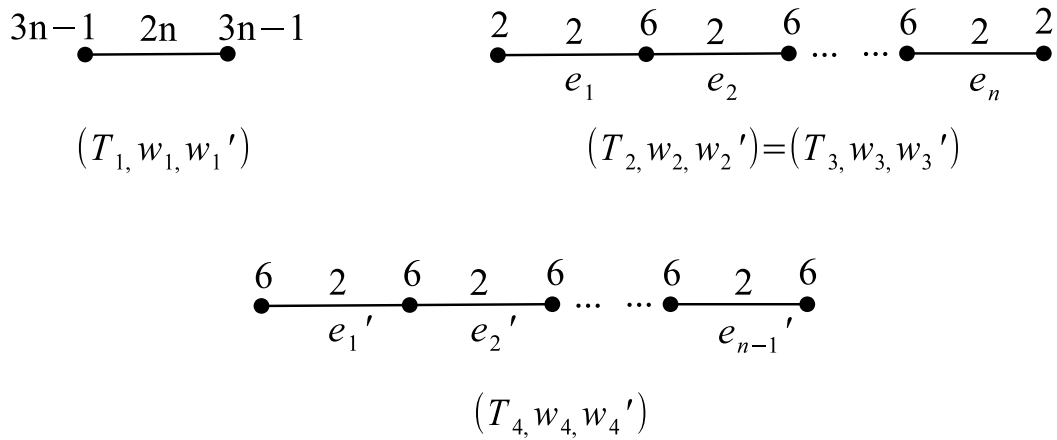}
\caption{Weighted quotient trees for the linear phenylene $PH_n$.}
	\label{trees2}
\end{figure}

\noindent
From the obtained trees it is easy to calculate the sums of weights in corresponding connected components. The results are collected in Table \ref{tab1}.

\begin{table}[H]
\centering
\begin{tabular}{|c||c|c|c|c|} \hline
  & $n_1$ & $n_2$ & $m_1$ & $m_2$ \\ \hline
 $e_i$, $1 \leq i \leq n$  & $6i-4$  &       $6(n-i)+2$  &   $2(i-1)$  &   $2(n-i)$   \\ \hline
  $e_i'$, $1 \leq i \leq n-1$    & $6i$  &       $6(n-i)$  &   $2(i-1)$  &   $2(n-i-1)$ \\
 \hline
\end{tabular}
\caption{\label{tab1} Sum of weights in the corresponding connected components of quotient trees $T_2=T_3$ and $T_4$.}
\end{table}

\noindent
Using Table \ref{tab1} we can compute the corresponding Wiener indices of $T_2$ (which are the same also for $T_3$):

\begin{eqnarray*}
W_v(T_2,w_2) &= & 6 n^3 - 6 n^2 + 4 n, \\
\widehat{W}_e (T_2,w_2') & = & \frac{1}{3} (2 n^3 - 6 n^2 + 4 n), \\
W_{ve}(T_2,w_2,w_2') & = & 4 n^3 - 8 n^2 + 4 n.
\end{eqnarray*}

\noindent
From Table \ref{tab1} we also get the corresponding Wiener indices of $T_4$:

\begin{eqnarray*}
W_v(T_4,w_4) & = & 6n^3-6n, \\
\widehat{W}_e(T_4,w_4') & = &\frac{1}{3} (2 n^3 - 12 n^2 + 22 n-12), \\
W_{ve}(T_4,w_4,w_4') & = & 4 n^3 - 12 n^2 + 8 n.
\end{eqnarray*}

\noindent
However, the computations for $T_1$ are trivial:

\begin{eqnarray*}
W_v(T_1,w_1) & = & 9 n^2 - 6 n + 1, \\
\widehat{W}_e(T_1,w_1') & = & 0, \\
W_{ve}(T_1,w_1,w_1') & = & 0.
\end{eqnarray*}

\noindent
Finally, using Theorem \ref{thm:edge-Wiener} we conclude

$$\widehat{W}_e(PH_n)=32 n^3 - 39 n^2 + 22 n - 3$$

\noindent
 and by Equation \eqref{eq:simple-connection} it follows
$$W_e(PH_n)=32 n^3 - 7 n^2 + 2 n.$$

Next, we consider the edge-hyper-Wiener index of linear phenylenes. We denote the elementary cuts of $PH_n$ with $A$, $B_1, \ldots, B_n$, $C_1, \ldots, C_n$, and $D_1, \ldots, D_{n-1}$ as shown in Figure \ref{cuts}.

 \begin{figure}[!htb]
	\centering
		\includegraphics[scale=0.8, trim=0cm 0cm 0cm 0cm]{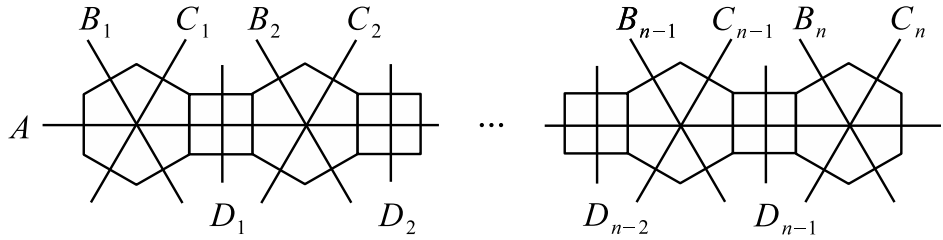}
\caption{Elementary cuts of a linear phenylene $PH_n$.}
	\label{cuts}
\end{figure}

\noindent
The contributions of all the pairs of elementary cuts are presented in Table \ref{tab2}.

\begin{table}[H]

\centering
\resizebox{\textwidth}{!}{
\begin{tabular}{|c||c|c|c|c|c|} \hline
  & $m^{00}$ & $m^{01}$ & $m^{10}$ & $m^{11}$ & $\sum f$ \\ \hline
 $A, B_i$ & $3i-3$  &       $3n-3i+1$  &   $3i-2$  &   $3n-3i$ & $3n^3 - 6n^2 + 4n$  \\ \hline
 $A, C_i$ & $3i-3$  &       $3n-3i+1$  &   $3i-2$  &   $3n-3i$ & $3n^3 - 6n^2 + 4n$  \\ \hline
  $A,D_i$ & $3i-1$  &       $3n-3i-1$  &   $3i-1$  &   $3n-3i-1$ & $3n^3 - 6n^2 + 5n - 2$ \\ \hline
 $B_i, B_j,i<j$ & $8i-6$  &    $0$  &   $8j-8i-2$  &   $8n-8j+2$ & $\frac{1}{3}(8n^4 - 32n^3 + 46n^2 - 22n) $  \\ \hline
 $C_i, C_j,i<j$ & $8i-6$  &    $0$  &   $8j-8i-2$  &   $8n-8j+2$ & $\frac{1}{3}(8n^4 - 32n^3 + 46n^2 - 22n) $  \\ \hline
 $D_i, D_j,i<j$ & $8i-2$  &    $0$  &   $8j-8i-2$  &   $8n-8j-2$ & $\frac{1}{3}(8n^4 - 32n^3 + 46n^2 - 34n + 12)$  \\ \hline
 $B_i, C_j, i<j$  & $8i-6$  &    $0$  &   $8j-8i-2$  &   $8n-8j+2$ & $\frac{1}{3}(8n^4 - 32n^3 + 46n^2 - 22n) $  \\ \hline
 $B_i, C_j, i>j$  & $8i-6$  &    $0$  &   $8j-8i-2$  &   $8n-8j+2$ & $\frac{1}{3}(8n^4 - 32n^3 + 46n^2 - 22n) $  \\ \hline
 $B_i, C_i$      & $8i-7$  &    $0$  &   $0$  &   $8n-8i+1$ & $\frac{1}{3}(32n^3 - 72n^2 + 43n) $  \\ \hline
 $B_i, D_j, i\leq j$  & $8i-6$  &    $0$  &   $8j-8i+2$  &   $8n-8j-2$ & $\frac{1}{3}(8n^4 - 16n^3 + 10n^2 - 2n) $  \\ \hline
 $B_i, D_j, i>j$  & $8j-2$  &    $0$  &   $8i-8j-6$  &   $8n-8i+2$ & $\frac{1}{3}(8n^4 - 16n^3 + 10n^2 - 2n) $  \\ \hline
 $C_i, D_j, i\leq j$  & $8i-6$  &    $0$  &   $8j-8i+2$  &   $8n-8j-2$ & $\frac{1}{3}(8n^4 - 16n^3 + 10n^2 - 2n) $  \\ \hline
 $C_i, D_j, i>j$  & $8j-2$  &    $0$  &   $8i-8j-6$  &   $8n-8i+2$ & $\frac{1}{3}(8n^4 - 16n^3 + 10n^2 - 2n) $  \\ \hline

 \hline
\end{tabular}}

\caption{\label{tab2} Contributions of pairs of elementary cuts for $PH_n$.}
\end{table}

\noindent
The expressions from Table \ref{tab2} give
$$WW_e^*(PH_n) = \frac{1}{3}(64n^4 - 133n^3 + 98n^2 - 14n - 6).$$

\noindent
Finally, by Theorem \ref{osnova} the edge-hyper Wiener index of linear phenylene $PH_n$  is equal to
$$\begin{array}{rcl}
WW_e(PH_n) & =& 2W_e(PH_n)+WW_e^{*}- {{|E(PH_n)|}\choose{2}}\\
     &&\\
                &=&  \frac{1}{3}(64n^4 +59n^3 -40n^2 +58 n - 15)\,.
\end{array}$$

\section*{Acknowledgment} 

\noindent The author acknowledge the financial support from the Slovenian Research Agency (research core funding No. P1-0297). 



\end{document}